\def\NZQ{\mathbb}               
\def\ZZ{{\NZQ Z}}
\def\RR{{\NZQ R}}
\def\frk{\mathfrak}               
\def\Phi{{\frk N}}
\def\eb{{\bold e}}
\def\xb{{\bold x}}
\def\yb{{\bold y}}
\def\opn#1#2{\def#1{\operatorname{#2}}} 
\opn\chara{char} 
\opn\length{\ell} 
\opn\pd{pd} 
\opn\rk{rk}
\opn\projdim{proj\,dim} 
\opn\injdim{inj\,dim} 
\opn\rank{rank}
\opn\depth{depth} 
\opn\grade{grade} 
\opn\height{height}
\opn\embdim{emb\,dim} 
\opn\codim{codim}
\opn\Tr{Tr} 
\opn\bigrank{big\,rank}
\opn\superheight{superheight}
\opn\lcm{lcm}
\opn\trdeg{tr\,deg}
\opn\reg{reg} 
\opn\lreg{lreg} 
\opn\ini{in} 
\opn\lpd{lpd}
\opn\size{size}
\opn\mult{mult}
\opn\dist{dist}
\opn\cone{cone}
\opn\lex{lex}
\opn\rev{rev}
\opn\bipyr{bipyr}
\opn\div{div} \opn\Div{Div} \opn\cl{cl} \opn\Cl{Cl}
\opn\Spec{Spec} \opn\Supp{Supp} \opn\supp{supp} \opn\Sing{Sing}
\opn\Ass{Ass} \opn\Min{Min}
\opn\Ann{Ann} \opn\Rad{Rad} \opn\Soc{Soc}
\opn\Syz{Syz} \opn\Im{Im} \opn\Ker{Ker} \opn\Coker{Coker}
\opn\Am{Am} \opn\Hom{Hom} \opn\Tor{Tor} \opn\Ext{Ext}
\opn\End{End} \opn\Aut{Aut} \opn\id{id} \opn\ini{in}
\opn\nat{nat}
\opn\pff{pf}
\opn\Pf{Pf} \opn\GL{GL} \opn\SL{SL} \opn\mod{mod} \opn\ord{ord}
\opn\Gin{Gin}
\opn\Hilb{Hilb}\opn\adeg{adeg}\opn\std{std}\opn\ip{infpt}
\opn\Pol{Pol}
\opn\sat{sat}
\opn\Var{Var}
\opn\Gen{Gen}
\opn\aff{aff} \opn\con{conv} \opn\relint{relint} \opn\st{st}
\opn\lk{lk} \opn\cn{cn} \opn\core{core} \opn\vol{vol}
\opn\link{link} \opn\star{star}
\opn\gr{gr}
\def\Pc{{\mathcal P}}
\def\Qc{{\mathcal Q}}
\def\vol{{\textnormal{vol}}}
\def\conv{{\textnormal{conv}}}
\def\ord{{\textnormal{ord}}}
\def\pot#1#2{#1[\kern-0.28ex[#2]\kern-0.28ex]}
\opn\dirlim{\underrightarrow{\lim}}
\opn\inivlim{\underleftarrow{\lim}}
\def\Implies{\ifmmode\Longrightarrow \else
	\unskip${}\Longrightarrow{}$\ignorespaces\fi}
\def\implies{\ifmmode\Rightarrow \else
	\unskip${}\Rightarrow{}$\ignorespaces\fi}
\def\iff{\ifmmode\Longleftrightarrow \else
	\unskip${}\Longleftrightarrow{}$\ignorespaces\fi}
\newtheorem{Theorem}{Theorem}[section]
\newtheorem{Example}[Theorem]{Example}
\newtheorem{Conjecture}[Theorem]{Conjecture}
\numberwithin{equation}{section}
\let\epsilon\varepsilon
\let\phi=\varphi
\let\kappa=\varkappa
\def\qed{\ifhmode\textqed\fi
	\ifmmode\ifinner\quad\qedsymbol\else\dispqed\fi\fi}
\def\textqed{\unskip\nobreak\penalty50
	\hskip2em\hbox{}\nobreak\hfil\qedsymbol
	\parfillskip=0pt \finalhyphendemerits=0}
\def\dispqed{\rlap{\qquad\qedsymbol}}
\opn\dis{dis}
\opn\height{height}
\opn\dist{dist}
\def\pnt{{\raise0.5mm\hbox{\large\bf.}}}
\opn\Lex{Lex}
\opn\conv{conv}
\opn\Ehr{Ehr}
\begin{document}
\title{Odd cycles and Hilbert functions of their toric rings}
	\author[T.~Hibi]{Takayuki Hibi}
\address[Takayuki Hibi]{Department of Pure and Applied Mathematics,
	Graduate School of Information Science and Technology,
	Osaka University,
	Suita, Osaka 565-0871, Japan}
\email{hibi@math.sci.osaka-u.ac.jp}
\author{Akiyoshi Tsuchiya}
\address[Akiyoshi Tsuchiya]
{Graduate school of Mathematical Sciences,
University of Tokyo,
Komaba, Meguro-ku, Tokyo 153-8914, Japan} 
\email{akiyoshi@ms.u-tokyo.ac.jp}

\subjclass[2010]{Primary 13A02; Secondary 13H10}
\keywords{$O$-sequence, $h$-vector, flawless, toric ring, stable set polytope}
\thanks{The first author was partially supported by JSPS KAKENHI 19H00637.  The second author was partially supported by JSPS KAKENHI 19K14505 and 19J00312.}
\begin{abstract}
Studying Hilbert functions of concrete examples of normal toric rings, it is demonstrated that, for each $1 \leq s \leq 5$, an $O$-sequence $(h_0, h_1, \ldots, h_{2s-1}) \in \ZZ_{\geq 0}^{2s}$ satisfying the properties that (i) $h_0 \leq h_1 \leq \cdots \leq h_{s-1}$, (ii) $h_{2s-1} = h_0$, $h_{2s-2} = h_1$ and (iii) $h_{2s - 1 - i} = h_i + (-1)^{i}$, $2 \leq i \leq s - 1$, can be the $h$-vector of a Cohen--Macaulay standard $G$-domain.
\end{abstract}
\maketitle
\thispagestyle{empty}
\section*{Background}
In the paper \cite{flawless} published in 1989, several conjectures on Hilbert functions of Cohen--Macaulay integral domains are studied.

Let $A = \bigoplus_{n=0}^{\infty} A_n$ be a {\em standard} $G$-algebra \cite{Sta}.  Thus $A$ is a Noetherian commutative graded ring for which (i) $A_0 = K$ a field, (ii) $A = K[A_1]$ and (iii) $\dim_K A_1 < \infty$.  The {\em Hilbert function} of $A$ is defined by
\[
H(A,n) = \dim_K A_n, \, \, \, \, \, n = 0, 1, 2, \ldots
\]
Let $\dim A = d$ and $v = H(A,1) = \dim_K A_1$.  A classical result (\cite[Chapter 5, Section 13]{Mat} says that $H(A,n)$ is a polynomial for $n$ sufficiently large and its degree is $d - 1$.  It follows that the sequence $h(A) = (h_0, h_1,  h_2, \ldots)$, called the {\em $h$-vector} of $A$, defined by the formula 
\[
(1 - \lambda)^d \sum_{n=0}^{\infty} H(A,n) \lambda^n = \sum_{i=0}^{\infty} h_i \lambda^i
\]
has finitely many non-zero terms with $h_0 = 1$ and $h_1 = v - d$.  If $h_i =0$ for $i > s$ and $h_s \neq 0$, then we write $h(A) = (h_0, h_1, \ldots, h_s)$. 

Let $Y_1,\ldots,Y_s$ be indeterminates. A non-empty set $M$ of monomials $Y_1^{a_1} \cdots Y_r^{a_r}$ in the variables $Y_1,\ldots,Y_r$ is said to be an \textit{order ideal of monomials} if, whenever $m \in M$ and $m'$ divides $m$, then $m' \in M$. Equivalently, if $Y_1^{a_1} \cdots Y_r^{a_r} \in M$ and $0 \leq b_i \leq a_i$, then $Y_1^{b_1} \cdots Y_r^{b_r} \in M$. In particular, since $M$ is non-empty, $1 \in M$. A finite sequence $(h_0,h_1,\ldots,h_s)$ of non-negative integers is said to be an \textit{$O$-sequence} if there exists an order ideal $M$ of monomials in $Y_,\ldots,Y_r$ with each $\deg Y_i=$ such that $h_j=|\{m \in M | \deg m=j\}|$ for any $0 \leq j \leq s$. In particular, $h_0=1$.
 If $A$ is Cohen--Macaulay, then $h(A) = (h_0, h_1, \ldots, h_s)$ is an $O$-sequence \cite[p.~60]{Sta}.  Furthermore, a finite sequence $(h_0, h_1, \ldots, h_s)$ of integers with $h_0 = 1$ and $h_s \neq 0$ is the $h$-vector of a Cohen--Macaulay standard $G$-algebra if and only if $(h_0, h_1, \ldots, h_s)$ is an $O$-sequence \cite[Corollary 3.11]{Sta}. 

An $O$-sequence $(h_0, h_1, \ldots, h_s)$ with $h_s \neq 0$ is called {\em flawless} \cite[p.~245]{flawless} if (i) $h_i \leq h_{s - i}$ for $0 \leq i \leq [s/2]$ and (ii) $h_0 \leq h_1 \leq \cdots \leq h_{[s/2]}$.  A standard $G$-domain is a standard $G$-algebra which is an integral domain.  It was conjectured (\cite[Conjecture 1.4]{flawless}) that the $h$-vector of a Cohen--Macaulay standard $G$-domain is flawless.  Niesi and Robbiano \cite[Example 2.4]{NR} succeeded in constructing a Cohen--Macaulay standard $G$-domain with $(1,3,5,4,4,1)$ its $h$-vector.  Thus, in general, the $h$-vector of a Cohen--Macaulay standard $G$-domain cannot be flawless.  

In the present paper, it is shown that, for each $1 \leq s \leq 5$, an $O$-sequence 
\[
(h_0, h_1, \ldots, h_{s-1}, h_{s}, \ldots, h_{2s-2}, h_{2s-1}) \in \ZZ_{\geq 0}^{2s}
\]
satisfying the properties that
\begin{itemize}
\item[(i)]
$h_0 \leq h_1 \leq \cdots \leq h_{s-1}$,
\item[(ii)]
$h_{2s-1} = h_0$, \, $h_{2s-2} = h_1$,
\item[(iii)]
$h_{2s - 1 - i} = h_i + (-1)^{i}$, $2 \leq i \leq s - 1$
\end{itemize}
can be the $h$-vector of a normal toric rings arising from a cycle of odd length.  In particular, the above $O$-sequence, which is non-flawless for each of $s = 4$ and $s = 5$, can be the $h$-vector of a Cohen--Macaulay standard $G$-domain.

\section{Toric rings arising from odd cycles}
Let $C_{2s+1}$ denote a cycle of length $2s+1$, where $s \geq 1$, on $[2s+1] = \{1, 2, \ldots, 2s+1\}$ with the edges
\begin{eqnarray}
\label{edge}
\{1,2\}, \{2,3\}, \ldots, \{2s-1,2s\}, \{2s,2s+1\},\{2s+1,1\}.
\end{eqnarray}
A finite set $W \subset [2s+1]$ is called {\em stable} in $C_{2s+1}$ if none of the sets of (\ref{edge}) is a subset of $W$.  In particular, the empty set $\emptyset$ and $\{ 1 \}, \{ 2 \}, \ldots, \{ 2s + 1 \}$ are stable.  Let $S = K[x_1, \ldots, x_{2s+1}, y]$ denote the polynomial ring in $2s+2$ variables over $K$.  The {\em toric ring} of $C_{2s+1}$ is the subring $K[C_{2s+1}]$ of $S$ which is generated by those squarefree monomials $(\prod_{i \in W} x_i) y$ for which $W \subset [2s+1]$ is stable in $C_{2s+1}$.  It follows that $K[C_{2s+1}]$ can be a standard $G$-algebra with each $\deg (\prod_{i \in W} x_i) y = 1$.  It is shown \cite[Theorem 8.1]{cycle} that $K[C_{2s+1}]$ is normal.  In particular, $K[C_{2s+1}]$ is a Cohen--Macaulay standard $G$-domain. Now, we discuss when $K[C_{2s+1}]$ is Gorenstein. Here a Cohen--Macaulay ring is called \textit{Gorenstein} if it has finite injective dimension.

\begin{Theorem}
\label{Gor}
The toric ring $K[C_{2s+1}]$ is Gorenstein if and only if either $s = 1$ or $s = 2$.
\end{Theorem}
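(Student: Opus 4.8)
The plan is to use the fact, already recorded in the excerpt, that $K[C_{2s+1}]$ is normal, so it coincides with the Ehrhart ring of the stable set polytope $P\subset\RR^{2s+1}$ of $C_{2s+1}$. By the Danilov--Stanley description, the canonical module of a normal affine semigroup ring is the ideal generated by the monomials corresponding to the lattice points in the relative interior of the cone $C=\cone(P\times\{1\})\subset\RR^{2s+2}$ (with last coordinate the variable $y$). A normal semigroup ring is Gorenstein exactly when this module is principal, i.e. when there is a lattice point $\mathbf{c}\in\relint(C)\cap\ZZ^{2s+2}$ with $\relint(C)\cap\ZZ^{2s+2}=\mathbf{c}+\bigl(C\cap\ZZ^{2s+2}\bigr)$. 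Thus the entire problem is reduced to a lattice-point computation inside $C$, and the first step is to write $C$ down explicitly.

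For this I would use the classical facet description of the stable set polytope of an odd cycle: $P$ is cut out by the inequalities $x_i\ge 0$, the edge inequalities $x_i+x_j\le 1$ for the edges in (\ref{edge}), and the single odd-cycle inequality $\sum_{i=1}^{2s+1}x_i\le s$. Homogenizing gives $C=\{(\mathbf{x},y):x_i\ge0,\ x_i+x_j\le y,\ \sum_i x_i\le sy\}$ (the edge inequalities being redundant when $s=1$), so a lattice point lies in $\relint(C)$ precisely when $x_i\ge1$, $x_i+x_j\le y-1$ on every edge, and $\sum_i x_i\le sy-1$. A short analysis of these strict inequalities shows that the interior lattice point of smallest height is unique: for $s\ge 2$ the edge conditions force every coordinate to be $1$ at height $y=3$, giving $\mathbf{c}=(1,\dots,1;3)$, while for $s=1$ the minimal interior point is $\mathbf{c}=(1,\dots,1;4)$. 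Any generator of a principal canonical module must be this point (its $y$-coordinate is at least the minimal interior height $\delta$ since $\mathbf{c}$ is interior, and at most $\delta$ since the minimal-height interior point equals $\mathbf{c}$ plus a cone element of nonnegative height), so it remains only to test this specific $\mathbf{c}$.

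The inclusion $\mathbf{c}+(C\cap\ZZ^{2s+2})\subseteq\relint(C)$ is automatic, so everything reduces to the reverse inclusion, and this is where the dichotomy appears. Writing $x_i=x_i'+1$ and $y=y'+\delta$ (with $\delta=4$ when $s=1$ and $\delta=3$ when $s\ge 2$), the nonnegativity and edge conditions defining $\relint(C)$ translate into exactly the nonnegativity and edge inequalities of $C$. In the cases $s=1$ and $s=2$ the odd-cycle condition $\sum_i x_i\le sy-1$ likewise becomes precisely $\sum_i x_i'\le sy'$, so the translated system is the defining system of $C$ and the equality holds; equivalently, $4P$ (for $s=1$) and $3P$ (for $s=2$) become reflexive polytopes once recentered at $\mathbf{c}$. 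For $s\ge 3$, however, the same translation sends the odd-cycle inequality to $\sum_i x_i'\le sy'+(s-2)$, strictly weaker than $\sum_i x_i'\le sy'$ since $s-2\ge 1$. Concretely, $(2,\dots,2;5)\in\relint(C)$ for every $s\ge 3$, whereas $(2,\dots,2;5)-\mathbf{c}=(1,\dots,1;2)$ fails $\sum_i x_i'\le sy'$ and so lies outside $C$. This exhibits an interior lattice point not of the form $\mathbf{c}+(\text{cone point})$, so the canonical module is not principal and $K[C_{2s+1}]$ is not Gorenstein.

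The step I expect to require the most care is the reduction itself: invoking the Danilov--Stanley formula so that Gorensteinness becomes the principality condition, and pinning down that the unique minimal interior point $\mathbf{c}=(1,\dots,1;\delta)$ is the only possible generator. Once that framework is in place, recognizing the odd-cycle facet as the sole source of failure---it sits at lattice distance $s-1$ from $\mathbf{c}$---makes the trichotomy $s=1$, $s=2$, $s\ge 3$ transparent, and the single explicit interior point $(2,\dots,2;5)$ settles the negative direction uniformly for all $s\ge 3$.
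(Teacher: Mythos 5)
Your proof is correct, and it takes a partly different route from the paper's. The paper handles the two directions by citation: for $s=1,2$ it writes down the $h$-vectors of $K[C_3]$ and $K[C_5]$, observes they are symmetric, and invokes Stanley's criterion that a Cohen--Macaulay standard $G$-domain is Gorenstein if and only if its $h$-vector is symmetric; for $s\geq 3$ it uses the same facet description you use (Mahjoub for the inequalities, Chv\'atal for the facet property of $x_1+\cdots+x_{2s+1}\leq s$), notes that $(1,\ldots,1)$ is an interior lattice point of $3\Qc_{C_{2s+1}}$ while $\Qc_{C_{2s+1}}$ and $2\Qc_{C_{2s+1}}$ have none, and concludes non-Gorensteinness because $\Pc_s=3\Qc_{C_{2s+1}}-(1,\ldots,1)$ has the facet $x_1+\cdots+x_{2s+1}\leq s-1$ with right-hand side different from $1$, hence is not reflexive (citing Hibi's dual-polytope theorem and then De~Negri--Hibi). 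You replace all of these citations by the single Danilov--Stanley description of the canonical module and do everything as explicit lattice-point computations in the cone: identifying $(1,\ldots,1;\delta)$ as the only possible generator, verifying principality by the change of variables for $s=1,2$, and exhibiting the witness $(2,\ldots,2;5)$ whose difference $(1,\ldots,1;2)$ violates $\sum_i x_i\leq sy$ for $s\geq 3$. Your negative direction is essentially the paper's reflexivity argument unpacked --- the witness encodes precisely the fact that the odd-cycle facet lies at lattice distance $s-1\geq 2$ from the center --- but your positive direction is genuinely different and more self-contained: the paper's rests on computed $h$-vectors, and in fact the values quoted there are slightly off ($K[C_3]$ is a polynomial ring in four variables, so its $h$-vector is $(1)$ rather than $(1,1)$, and $h_1(K[C_5])=11-6=5$ rather than $6$, by the paper's own formula $h_1=v-d$), though symmetry and hence the conclusion survive. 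What the paper's route buys is brevity; what yours buys is a uniform, citation-free treatment of both directions in which the dichotomy at $s=2$ versus $s\geq 3$ is visible in a single inequality, $\sum_i x_i'\leq sy'+(s-2)$.
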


\begin{proof}
Since the $h$-vector of $K[C_3]$ is $(1,1)$ and since the $h$-vector of $K[C_5]$ is $(1,6,6,1)$, it follows from \cite[Theorem 4.4]{Sta} that each of $K[C_3]$ and $K[C_5]$ is Gorenstein.

Now, we show that $K[C_{2s+1}]$ is not Gorenstein if $s \geq 3$.  Let $s \geq 3$.  Write $\Qc_{C_{2s+1}} \subset \RR^{2s+1}$ for the {\em stable set polytope} of $C_{2s+1}$.  Thus $\Qc_{C_{2s+1}}$ is the convex hull of the finite set 
\[
\left\{\sum_{i \in W} \eb_i : W \mbox{ is a stable set of } G  \right\} \subset \RR^{2s+1},
\]
where $\eb_1,\ldots,\eb_{2s+1} \in \RR^{2s+1}$ are the canonical unit coordinate vectors of $\RR^{2s+1}$ and where $\sum_{i \in \emptyset} \eb_i = (0, \ldots, 0) \in \RR^{2s+1}$.
One has $\dim \Qc_{2s+1}=2s+1$.  Then \cite[Theorem 4]{stableineq} says that $\Qc_{C_{2s+1}}$ is defined by the following inequalities:
\begin{itemize}
	\item $0 \leq x_i \leq 1$ for all $1 \leq i \leq 2s+1$;
	\item $x_i+x_{i+1} \leq 1$ for all $1 \leq i \leq 2s$\,;
	\item $x_1+x_{2s+1} \leq 1$;
	\item $x_1+\cdots+x_{2s+1} \leq s$.
\end{itemize}
It then follows that each of $\Qc_{C_{2s+1}}$ and $2 \Qc_{C_{2s+1}}$ has no interior lattice points and that $(1,\ldots,1)$ is an interior lattice point of $3\Qc_{C_{2s+1}}$.
Furthermore, \cite[Theorem 4.2]{stablefacet} guarantees that the inequality 
\[
x_1+\cdots+x_{2s+1} \leq s
\]
defines a facet of $\Qc_{C_{2s+1}}$.
Let $\Pc_s=3\Qc_{C_{2s+1}}-(1,\ldots,1)$.
Thus the origin of $\RR^{2s+1}$ is an interior lattice point of $\Pc_s$ and the inequality 
\[
x_1+\cdots+x_{2s+1} \leq s-1
\]
defines a facet of $\Pc_s$.
This fact together with \cite{dual} implies that $\Pc_s$ is not {\em reflexive}.  In other words, the {\em dual polytope} $\Pc_s^{\vee}$ of $\Pc_s$ defined by
\[
\Pc_s^\vee=\{\yb \in \RR^{2s+1}  :  \langle \xb,\yb \rangle \leq 1 \ \text{for all}\  \xb \in \Pc_s \}
\]
is not a lattice polytope, where $\langle \xb,\yb \rangle$ is the usual inner product of $\RR^{2s+1}$.
It then follows from \cite[Theorem (1.1)]{DeNegriHibi} (and also from \cite[Theorem 8.1]{cycle}) that $K[C_{2s+1}]$ is not Gorenstein, as desired.
\end{proof}

It is known \cite[Theorem 4.4]{Sta} that  a  Cohen--Macaulay standard $G$-domain $A$ is Gorenstein  if and only if the $h$-vector $h(A)=(h_0,\ldots,h_s)$ is symmetric, i.e., $h_i=h_{s-i}$ for $0 \leq i \leq [s/2]$. 
Hence the $h$-vector of the toric ring $K[C_{2s+1}]$ is not symmetric when $s \geq 3$.                 

\begin{Example}
{\em
By using Normaliz \cite{Normaliz},
the $h$-vector of the toric ring $K[C_7]$ is $(1, 21, 84, 85, 21, 1)$. 
}
\end{Example}

\section{Non-flawless $O$-sequences of normal toric rings}
We now come to concrete examples of non-flawless $O$-sequences which can be the $h$-vectors of normal toric rings. 

\begin{Example}
{\em
%
The $h$-vector of the toric ring $K[C_9]$ is 
\[
(1, 66, 744, 2305, 2304, 745, 66, 1).
\]
Furthermore, 
\[
(1, 187, 5049, 37247, 96448, 96449, 37246, 5050, 187, 1)
\]
is the $h$-vector of the toric ring $K[C_{11}]$.
}
\end{Example}

We conclude the present paper with the following

\begin{Conjecture}
{\em
The $h$-vector of the toric ring $K[C_{2s+1}]$ of $C_{2s+1}$ is of the form
\[
(1, h_1, h_2, h_3, \ldots, h_i, \ldots, h_{s-1}, h_{s-1} + (-1)^{s-1}, \ldots, h_i + (-1)^{i}, \ldots, h_3 - 1, h_2 + 1, h_1, 1).
\]
}
\end{Conjecture}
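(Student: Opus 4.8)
The plan is to recast the statement Ehrhart-theoretically and then to locate the single source of asymmetry. First I would use that $K[C_{2s+1}]$ is normal, so its Hilbert function equals the Ehrhart function of the stable set polytope $\Qc_{C_{2s+1}}$ and the $h$-vector $(h_0, \ldots, h_{2s-1})$ is exactly the Ehrhart $h^*$-vector ($\delta$-vector) of $\Qc_{C_{2s+1}}$. The length and the two easy endpoints come for free from the facet analysis already carried out: since $\Qc_{C_{2s+1}}$ and $2\Qc_{C_{2s+1}}$ have no interior lattice point while $3\Qc_{C_{2s+1}}$ does, the $\delta$-vector has degree $(2s+1)+1-3 = 2s-1$, so $h_0 = 1$ and $h_{2s-1}$ equals the number of interior lattice points of $3\Qc_{C_{2s+1}}$. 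A direct check using the defining inequalities shows this interior point is forced to be $(1, \ldots, 1)$ alone, giving $h_{2s-1} = 1 = h_0$.

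Next I would reformulate the displayed form as a single generating-function identity. Writing $h(\lambda) = \sum_{i=0}^{2s-1} h_i \lambda^i$, the conditions (ii) and (iii) together with $h_0 = h_{2s-1} = 1$ are equivalent to
\[
\lambda^{2s-1} h(1/\lambda) - h(\lambda) = \sum_{i=2}^{2s-3} (-1)^{i} \lambda^{i} = \frac{\lambda^{2}\,(1 - \lambda^{2s-4})}{1+\lambda},
\]
where the upper-half relations are automatic because $h_{2s-1-i}-h_i$ is antisymmetric in $i$. The left-hand side is the discrepancy between the $\delta$-vector and its reverse, and by Ehrhart--Stanley reciprocity the reverse $\lambda^{2s-1}h(1/\lambda)$ is, up to the shift by the $a$-invariant, the generating polynomial of the \emph{interior} Ehrhart data of $\Qc_{C_{2s+1}}$ --- equivalently the $h$-vector of the canonical module $\omega_{K[C_{2s+1}]}$, which for a normal Ehrhart ring is spanned by interior lattice points of the dilations. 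Thus the identity is precisely a quantitative measure of how far $K[C_{2s+1}]$ is from being Gorenstein, and the content of the conjecture is that this failure is a controlled, alternating $\pm 1$.

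The geometric mechanism I would exploit is that, after recentering, the asymmetry is produced by a \emph{single} facet. As in the Gorenstein theorem, put $\Pc_s = 3\Qc_{C_{2s+1}} - (1, \ldots, 1)$; all of its facets $p_i \geq -1$ and $p_i + p_{i+1} \leq 1$ (cyclically) lie at lattice distance $1$ from the origin, while only the odd-cycle facet $p_1 + \cdots + p_{2s+1} \leq s-1$ lies at lattice distance $s-1$. Hence $\Pc_s$ is \emph{reflexive up to one facet}, and a genuinely reflexive (hence $\delta$-symmetric) companion is obtained by dropping the odd-cycle inequality, i.e.\ from the relaxation $Q' = \{x \in \RR^{2s+1} : 0 \leq x_i,\ x_i + x_{i+1} \leq 1 \text{ cyclically}\}$. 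The plan is to compute $\delta_{Q'}$ exactly --- the cyclic local constraints make $L_{Q'}(n)$ a trace of a power of a transfer matrix, and the recentered $Q'$ is $\delta$-symmetric --- and then to account for the cut $\Qc_{C_{2s+1}} = Q' \cap \{x_1 + \cdots + x_{2s+1} \leq s\}$ by inclusion--exclusion across the shared odd-cycle facet. The thin region $R = Q' \cap \{x_1 + \cdots + x_{2s+1} \geq s\}$ excised near the half-integral apex $(1/2, \ldots, 1/2)$ is simplex-like, and I expect its Ehrhart data to contribute exactly the alternating series on the right-hand side above, which would simultaneously restore the symmetric backbone and produce the corrections.

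The hard part will be the global odd-cycle inequality itself: it is the unique non-local constraint and the unique non-reflexive facet, and every step above must be made uniform in $s$. Concretely, one must (a) evaluate the transfer-matrix count for $Q'$ and confirm its recentered reflexivity, and (b) prove that the shared-facet bookkeeping yields the correction \emph{exactly} $(-1)^i$ and never a larger deviation. Step (b) reduces to counting the lattice points of the dilations $n\Qc_{C_{2s+1}}$ lying on or beyond the odd-cycle facet --- equivalently, the dilations of maximum stable sets of $C_{2s+1}$ --- and showing they contribute an alternating unit; establishing this bound uniformly in $s$ is the crux, and is presumably why the cases $s \leq 5$ were confirmed by direct Normaliz computation rather than by a general argument.
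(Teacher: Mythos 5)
Be aware first that the paper offers \emph{no} proof of this statement: it is the closing Conjecture, supported only by machine computation (the Normaliz $h$-vectors of $K[C_7]$, $K[C_9]$, $K[C_{11}]$, and the appended $C_{13}$ data). So there is no argument in the paper to compare yours against; your proposal has to stand alone as a complete proof, and as written it does not. The parts that are solid are the reformulation and the endpoints: normality identifies $h(K[C_{2s+1}])$ with the Ehrhart $\delta$-vector of $\Qc_{C_{2s+1}}$; the facet description shows $\Qc_{C_{2s+1}}$ and $2\Qc_{C_{2s+1}}$ have empty lattice interior while $(1,\ldots,1)$ is the unique interior lattice point of $3\Qc_{C_{2s+1}}$ (for $s \geq 2$), which gives degree $2s-1$ and $h_0 = h_{2s-1} = 1$; and your identity $\lambda^{2s-1}h(1/\lambda) - h(\lambda) = \lambda^2(1-\lambda^{2s-4})/(1+\lambda)$ is a correct algebraic restatement of (ii) and (iii). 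Everything past that point is a plan rather than a proof, and you concede as much: neither step (a) nor step (b) is carried out, and step (b) \emph{is} the conjecture.

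Two concrete obstacles in the plan itself. First, your companion polytope $Q' = \{x \in \RR^{2s+1} : x_i \geq 0,\ x_i + x_{i+1} \leq 1 \text{ cyclically}\}$ is not a lattice polytope: precisely because the cycle is odd, it has the half-integral vertex $(1/2,\ldots,1/2)$. Hence $3Q' - (1,\ldots,1)$, although every facet lies at lattice distance one from the origin, is not reflexive in the sense of \cite{dual} (reflexivity and the $\delta$-symmetry criterion there are statements about \emph{lattice} polytopes), $L_{Q'}(n)$ is only a quasi-polynomial of period $2$, and the asserted $\delta$-symmetry of the recentered $Q'$ is not a fact you can quote --- it needs its own proof or a genuine reformulation. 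Second, the inclusion--exclusion step: writing $L_{\Qc_{C_{2s+1}}}(n) = L_{Q'}(n) - \#\{z \in nQ' \cap \ZZ^{2s+1} : \textstyle\sum_i z_i > ns\}$, the conjecture is equivalent to an \emph{exact} evaluation of the generating function of this cap count, showing it produces precisely the alternating $\pm 1$ corrections; calling the cap ``simplex-like'' is a heuristic (its apex is the non-lattice point $(n/2,\ldots,n/2)$ and its structure varies with the parity of $n$), and no evaluation or even bound is given, uniformly in $s$ or otherwise. So the proposal sketches a sensible route but leaves the statement exactly as open as the paper does; it should not be presented as a proof.
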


\end{document}